\title{Pseudo-countable models}
\author{Joel David Hamkins}
\address[Joel David Hamkins]
{O'Hara Professor of Philosophy and Mathematics, University of Notre Dame, 100 Malloy Hall, Notre Dame, IN 46556 USA \&\ Associate Faculty Member, Professor of Logic, Faculty of Philosophy, University of Oxford, UK}
\email{jdhamkins@nd.edu}
\urladdr{http://jdh.hamkins.org}
\thanks{Commentary about this article can be made at \href{http://jdh.hamkins.org/pseudo-countable-models}{http://jdh.hamkins.org/pseudo-countable-models}.}
\newtheorem{theorem}{Theorem}
\newtheorem*{theorem*}{Theorem}
\newtheorem*{maintheorem*}{Main Theorem}
\newtheorem*{maintheorems*}{Main Theorems}
\newtheorem{corollary}[theorem]{Corollary}
\newtheorem*{corollary*}{Corollary}
\newtheorem*{corollaries*}{Corollaries}
\theoremstyle{definition}
\newtheorem*{definition*}{Definition}
\newtheorem*{question*}{Question}
\newtheorem*{questions*}{Questions}
\newtheorem*{mainquestion*}{Main Question} % without numbering
\newtheorem*{openquestion*}{Open Question} % without numbering
\theoremstyle{remark}
\newcommand{\QED}{\end{proof}}
\def\proclaim[#1]{{\bf #1}}
\def\BF#1.{{\bf #1.}}
\def\says#1:#2\par{\item[#1] #2\par}
\newcommand{\Los}{\L o\'s}
\newcommand{\B}{{\mathbb B}}
\renewcommand{\P}{{\mathbb P}}
\newcommand{\overbar}[1]{\mkern 3.5mu\overline{\mkern-3.5mu#1\mkern-.5mu}\mkern.5mu}% correct overline for slanted capital letters in math mode
\newcommand{\barin}{\mathrel{\mkern3mu\overline{\mkern-3mu\in\mkern-1.5mu}\mkern1.5mu}}
\newcommand{\Mbar}{{\overbar{M}}}
\newcommand{\Nbar}{{\overbar{N}}}
\newcommand{\Vbar}{{\overbar{V}}}
\newcommand{\dotminus}{\mathbin{\text{\@dotminus}}}
\newcommand{\@dotminus}{%
  \ooalign{\hidewidth\raise1ex\hbox{.}\hidewidth\cr$\m@th-$\cr}%
}
\newcommand{\of}{\subseteq}
\newcommand{\fo}{\supseteq}
\newcommand{\set}[1]{\{\,{#1}\,\}}
\newcommand{\elesub}{\prec}
\newcommand{\Coll}{\mathop{\rm Coll}}
\newcommand{\image}{\mathbin{\hbox{\tt\char'42}}}
\newcommand{\restrict}{\upharpoonright} % uses amssymb
\newcommand{\satisfies}{\models}
\renewcommand{\setminus}{\raise.3ex\hbox{\rotatebox{-20}{$-$}}} % the usual setminus is absurdly huge and vertical
\renewcommand{\emptyset}{\varnothing}
\newcommand{\intersect}{\cap}
\newcommand{\smalllt}{\mathrel{\mathchoice{\raise2pt\hbox{$\scriptstyle<$}}{\raise1pt\hbox{$\scriptstyle<$}}{\raise0pt\hbox{$\scriptscriptstyle<$}}{\scriptscriptstyle<}}}
\newcommand{\smallleq}{\mathrel{\mathchoice{\raise2pt\hbox{$\scriptstyle\leq$}}{\raise1pt\hbox{$\scriptstyle\leq$}}{\raise1pt\hbox{$\scriptscriptstyle\leq$}}{\scriptscriptstyle\leq}}}
   \def\DHLhksqrt#1#2{%
   \setbox0=\hbox{$#1\sqrt{#2\,}$}\dimen0=\ht0
   \advance\dimen0-0.2\ht0
   \setbox2=\hbox{\vrule height\ht0 depth -\dimen0}%
   {\box0\lower0.4pt\box2}}
\newcommand{\boolval}[1]{\mathopen{\lbrack\!\lbrack}\,#1\,\mathclose{\rbrack\!\rbrack}}
\def\[#1]{\mathopen{\lbrack\!\lbrack}#1\mathclose{\rbrack\!\rbrack}}
\newbox\gnBoxA
\newbox\gnBoxB
\newdimen\gnCornerHgt
\newdimen\gnArgHgt
\def\gcode #1{%
\setbox\gnBoxA=\hbox{$#1$}%
\setbox\gnBoxB=\hbox{$\bar #1$}%
\gnArgHgt=\ht\gnBoxB%
\ifnum     \gnArgHgt<\gnCornerHgt \gnArgHgt=0pt%
\else \advance \gnArgHgt by -\gnCornerHgt%
\fi \raise\gnArgHgt\hbox{\tiny$\ulcorner$} \box\gnBoxA %
\raise\gnArgHgt\hbox{\tiny$\urcorner$}}
\newcommand{\UnderTilde}[1]{{\setbox1=\hbox{$#1$}\baselineskip=0pt\vtop{\hbox{$#1$}\hbox to\wd1{\hfil$\sim$\hfil}}}{}}
\newcommand{\Undertilde}[1]{{\setbox1=\hbox{$#1$}\baselineskip=0pt\vtop{\hbox{$#1$}\hbox to\wd1{\hfil$\scriptstyle\sim$\hfil}}}{}}
\newcommand{\undertilde}[1]{{\setbox1=\hbox{$#1$}\baselineskip=0pt\vtop{\hbox{$#1$}\hbox to\wd1{\hfil$\scriptscriptstyle\sim$\hfil}}}{}}
\newcommand{\UnderdTilde}[1]{{\setbox1=\hbox{$#1$}\baselineskip=0pt\vtop{\hbox{$#1$}\hbox to\wd1{\hfil$\approx$\hfil}}}{}}
\newcommand{\Underdtilde}[1]{{\setbox1=\hbox{$#1$}\baselineskip=0pt\vtop{\hbox{$#1$}\hbox to\wd1{\hfil\scriptsize$\approx$\hfil}}}{}}
\renewcommand{\implies}{\mathrel{\rightarrow}}
\newcommand{\Implies}{\mathrel{\Rightarrow}}
\newcommand{\Iff}{\mathrel{\Longleftrightarrow}}
\newcommand{\iso}{\cong}
\def\<#1>{\left\langle#1\right\rangle}
\newcommand{\val}{\mathop{\rm val}\nolimits}
\newcommand{\ZFC}{{\rm ZFC}}
\newcommand{\ZF}{{\rm ZF}}
\newcommand{\CH}{{\rm CH}}
\newcommand{\HOD}{{\rm HOD}}
\newcommand{\PA}{{\rm PA}}
\newcommand{\cell}[1]{\boxit{\hbox to 17pt{\strut\hfil$#1$\hfil}}}
\newcommand{\head}[2]{\lower2pt\vbox{\hbox{\strut\footnotesize\it\hskip3pt#2}\boxit{\cell#1}}}
\newcommand{\boxit}[1]{\setbox4=\hbox{\kern2pt#1\kern2pt}\hbox{\vrule\vbox{\hrule\kern2pt\box4\kern2pt\hrule}\vrule}}
\newcommand{\Col}[3]{\hbox{\vbox{\baselineskip=0pt\parskip=0pt\cell#1\cell#2\cell#3}}}
\newcommand{\tapenames}{\raise 5pt\vbox to .7in{\hbox to .8in{\it\hfill input: \strut}\vfill\hbox to
.8in{\it\hfill scratch: \strut}\vfill\hbox to .8in{\it\hfill output: \strut}}}
\newcommand{\Head}[4]{\lower2pt\vbox{\hbox to25pt{\strut\footnotesize\it\hfill#4\hfill}\boxit{\Col#1#2#3}}}
\newcommand{\Dots}{\raise 5pt\vbox to .7in{\hbox{\ $\cdots$\strut}\vfill\hbox{\ $\cdots$\strut}\vfill\hbox{\
$\cdots$\strut}}}
\renewcommand{\UrlFont}{} % makes url text smaller (used only in bibliography?)
\addcolon\nolinkurl{#1}}\iffieldundef{eprintclass}{}{\UrlFont{\mkbibbrackets{\thefield{eprintclass}}}}}
\addcolon\nolinkurl{#1}\iffieldundef{eprintclass}{}{\UrlFont{\mkbibbrackets{\thefield{eprintclass}}}}}}
\newcommand\ZFbar{\overline\ZF}
\newcommand\ZFCbar{\overline\ZFC}
\begin{document}

\begin{abstract}
Every mathematical structure has an elementary extension to a pseudo-countable structure, one that is seen as countable inside a suitable class model of set theory, even though it may actually be uncountable. This observation, proved easily with the Boolean ultrapower theorem, enables a sweeping generalization of results concerning countable models to a rich realm of uncountable models. The Barwise extension theorem, for example, holds amongst the pseudo-countable models---every pseudo-countable model of \ZF\ admits an end extension to a model of $\ZFC+V=L$. Indeed, the class of pseudo-countable models is a rich multiverse of set-theoretic worlds, containing elementary extensions of any given model of set theory and closed under forcing extensions and interpreted models, while simultaneously fulfilling the Barwise extension theorem, the Keisler-Morley theorem, the resurrection theorem, and the universal finite sequence theorem, among others.
\end{abstract}

\maketitle

\section{Introduction}

Every mathematical structure $M$ has an elementary extension \mbox{$M\elesub\Mbar$} to what I shall call a \emph{pseudo-countable} structure $\Mbar$, one that is viewed as countable inside a certain class model of set theory $\mathcal{V}$. This can be seen by means of the Boolean ultrapower theorem, which constructs such models $\mathcal{V}$ as quotients of the Boolean-valued structures $V^\B/U$ arising with the forcing $\B$ collapsing $M$ to become countable; the elementary embedding of $M$ into $\Mbar$ is simply the restriction of the Boolean ultrapower embedding itself. (The entire Boolean-ultrapower construction takes place in $V$, with no need to form any actual forcing extension of the universe.)
Because the uncountable structure $\Mbar$ is seen as countable in $\mathcal{V}$, it falls under the scope there of any theorem on countable models, thereby providing immediate generalizations of results about countable models to an enormous realm of uncountable models. The class of pseudo-countable models of set theory is revealed in this way as a robust multiverse of set-theoretic worlds, one containing elementary extensions of every model of set theory and closed under forcing, class forcing, and interpreted models, while simultaneously fulfilling many attractive features holding in the realm of countable models. In this article I shall explain how this method plays out with several results in the model theory of arithmetic and set theory, such as the Barwise extension theorem and its generalizations to the resurrection theorem and the universal finite sequence theorem.\goodbreak

\section{Review of the Boolean ultrapower}\label{Section.Review-Boolean-ultrapower}

Let me review the basics of the Boolean ultrapower, described at greater length in \cite{HamkinsSeabold:BooleanUltrapowers}. The Boolean ultrapower is intimately connected with the Boolean-valued model approach to forcing, known since the 1960s in work of \cite{Vopenka1965:OnNablaModelOfSetTheory}, Scott, and Solovay, as explicated later in \cite{Bell1985:BooleanValuedModelsAndIndependenceProofs} and now in most standard set theory texts.\goodbreak

\begin{theorem}[Boolean ultrapower theorem]\label{Theorem.Boolean-ultrapower}
 For any forcing notion $\B$, there is an elementary embedding of the set-theoretic universe $V$ to a definable class model $\<\Vbar,\barin>$, not necessarily well founded, such that (in $V$) there is a $\Vbar$-generic filter $G$ for the forcing $\bar\B=j(\B)$.
 $$j:V\to\Vbar\of\Vbar[G]$$
 The embedding $j$ and the models $\<\Vbar,\barin>$ and $\<\Vbar[G],\barin>$ are all definable classes in~$V$.
\end{theorem}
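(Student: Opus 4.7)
The plan is to construct $\Vbar$ as a Boolean ultrapower of $V$ via an ultrafilter $U$ on the complete Boolean algebra associated with $\B$. First I would form the Boolean-valued model $V^\B$ in the usual way, with its Boolean truth assignment $\[\varphi(\tau_1,\ldots,\tau_n)]\in\B$ for $\B$-names $\tau_i$. Then choose, by Zorn's lemma in $V$, any ultrafilter $U\of\B$—crucially, $U$ need not be $V$-generic, so this takes place entirely in $V$. Define the quotient $\Vbar=V^\B/U$ by $\tau\sim_U\sigma$ iff $\[\tau=\sigma]\in U$, with membership $[\tau]\barin[\sigma]$ iff $\[\tau\in\sigma]\in U$; the class $\Vbar$ is thereby definable in $V$.

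The main technical step is a \Los-style theorem for the Boolean ultrapower: for every formula $\varphi$ and names $\tau_1,\ldots,\tau_n$, one has $\Vbar\models\varphi([\tau_1],\ldots,[\tau_n])$ iff $\[\varphi(\tau_1,\ldots,\tau_n)]\in U$. This proceeds by induction on formula complexity; the atomic and Boolean cases follow directly from the definitions and the fact that $U$ is an ultrafilter, while the existential step is the nontrivial part and uses the mixing lemma (equivalently, the maximum principle): given $\[\exists x\,\varphi(x,\vec\tau)]\in U$, one chooses witness names on a maximal antichain beneath this Boolean value and mixes them into a single name $\tau_0$ with $\[\varphi(\tau_0,\vec\tau)]\in U$. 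Then $j\colon V\to\Vbar$, defined by $j(x)=[\check x]_U$ for the canonical name $\check x$, is elementary: since $\[\varphi(\check x_1,\ldots,\check x_n)]\in\{0,1\}$ equals $1$ exactly when $V\models\varphi(\vec x)$, and $1\in U$, $0\notin U$, the \Los theorem immediately gives $\Vbar\models\varphi(j\vec x)$ iff $V\models\varphi(\vec x)$.

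It remains to produce $G$. Let $\dot G$ be the canonical $\B$-name for the generic filter, so $\[\check b\in\dot G]=b$ for each $b\in\B$, and set
\[
G=\Set{[\tau]_U\in j(\B) : \[\tau\in\dot G]\in U}.
\]
This is definable in $V$. That $G$ is a filter on $j(\B)$ follows by applying \Los to the statements asserting $\dot G$ is a filter on $\B$. For genericity, take any $D\in\Vbar$ that $\Vbar$ regards as dense in $j(\B)$, represented by a name $\dot D$ with $\[\dot D\text{ is dense in }\B]\in U$. I would then exploit the completeness of $\B$: by density of the Boolean-valued assertion and the maximum principle, there is a single $\B$-name $\tau$ with $\[\tau\in\dot D\wedge\tau\in\dot G]\in U$, whence $[\tau]_U\in D\cap G$.

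The main obstacle is this final genericity argument—ensuring $G$ meets \emph{every} dense class in $\Vbar$, not merely those coming from dense sets of $\B$ in $V$. The delicate point is that dense sets in $\Vbar$ are parameterized by arbitrary names, and converting density of $\dot D$ from a Boolean-valued statement into an actual witness requires the interplay of the completeness of $\B$ with the ultrafilter property of $U$, precisely the content for which the mixing lemma is designed. Once this is in place, $G\in V$ is a $\Vbar$-generic filter for $j(\B)=\bar\B$, and the entire construction—$\Vbar$, $\Vbar[G]$, $j$, and $G$—is definable from the parameters $\B$ and $U$ inside $V$.
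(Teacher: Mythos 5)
There is a genuine gap, and it lies at the very first step: you define $\Vbar$ to be the full quotient $V^\B/U$, but the theorem requires $\Vbar$ to be the quotient of the \emph{ground-model predicate}, $\Vbar=\check V/U$, the class of $[\tau]_U$ with $\boolval{\tau\in\check V}\in U$, where $\boolval{\tau\in\check V}=\bigvee_{x\in V}\boolval{\tau=\check x}$; the full quotient $V^\B/U$ is instead the extension model $\Vbar[G]$. With your definition, both of your main claims fail. First, elementarity: your justification is that $\boolval{\varphi(\check x_1,\ldots,\check x_n)}\in\{0,1\}$ and equals $1$ exactly when $V\satisfies\varphi(\vec x)$. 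That is true for atomic $\varphi$, but false in general, because the quantifiers inside $\boolval{\varphi}$ range over all names, i.e., over the forcing extension: with $\B=\Coll(\omega,\omega_1)$ one has $\boolval{\check\omega_1\text{ is countable}}=1$, so by your own \Los\ theorem $V^\B/U\satisfies$ ``$[\check\omega_1]_U$ is countable,'' and hence $x\mapsto[\check x]_U$ is not elementary into the full quotient. It \emph{is} elementary into $\check V/U$, because the relativized values $\boolval{\varphi^{\check V}(\check x_1,\ldots,\check x_n)}$ really are two-valued and track truth in $V$ (inductively, $\boolval{\exists y\in\check V\,\psi^{\check V}(y,\check{\vec x})}=\bigvee_{y\in V}\boolval{\psi^{\check V}(\check y,\check{\vec x})}$, a supremum of values in $\{0,1\}$).

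Second, genericity: with $\Vbar=V^\B/U$, a ``dense $D\in\Vbar$'' is represented by an \emph{arbitrary} name $\dot D$ with $\boolval{\dot D\text{ is dense in }\B}\in U$, including names for dense sets newly added by the forcing, and those $G$ provably cannot meet: taking $\dot D$ to name $\set{b\in\B\mid b\notin\dot G,\ b\neq 0}$, which is dense with Boolean value $1$ when $\B$ is atomless, one has $\boolval{\exists x\,(x\in\dot D\wedge x\in\dot G)}=0$, so no witness name $\tau$ as in your final mixing step can exist. The hypothesis missing from that step is precisely $\boolval{\dot D\in\check V}\in U$, and what makes the argument work is not the completeness of $\B$ but the fact that $\boolval{\dot G\text{ is }\check V\text{-generic}}=1$: any name with $\boolval{\dot D\in\check V\wedge\dot D\text{ dense}}=b$ is, below $b$, a mixture of check names $\check D$ of actually dense sets $D\of\B$ in $V$, each satisfying $\boolval{\dot G\intersect\check D\neq\emptyset}=1$. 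Once you set $\Vbar=\check V/U$ and demand genericity only for dense sets of that submodel --- which is exactly the paper's route --- the remainder of your argument (Zorn's lemma for $U$, \Los\ via the mixing lemma, $j(x)=[\check x]_U$, $G=[\dot G]_U$, definability from the parameters $\B$ and $U$) goes through as in the paper, with $V^\B/U$ now correctly playing the role of $\Vbar[G]\fo\Vbar$.
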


\begin{proof}
This is exactly what the Boolean ultrapower provides. Consider any forcing notion $\B$, a complete Boolean algebra. Let $V^\B$ be the usual class of all $\B$ names. This is well known to be a $\B$-valued model of \ZFC\ set theory, with the Boolean values $\boolval{\varphi}$ defined as a member of $\B$ for any assertion $\varphi$ in the forcing language, with includes the membership relation $\in$ and constant symbols for every $\B$-name, as well as a predicate $\check V$ for the ground model, defined by
  $$\boolval{\tau\in\check V}\quad =\quad \bigvee_{x\in V}\boolval{\tau=\check x}.$$
Every axiom of \ZFC\ has Boolean value $1$. 

Let $U\of\B$ be any ultrafilter on $\B$ in $V$, and let me remark specifically that there is no need in this construction for $U$ to be generic in any way. Indeed, one should specifically use $U\in V$ in order to ensure that the models and the embedding are definable classes in $V$, definable from parameters $\B$ and $U$. We define the corresponding equivalence and membership relations modulo $U$ by
\begin{align*}
  \sigma=_U\tau\quad &\Iff\quad\boolval{\sigma=\tau}\in U \\
  \sigma\in_U\tau\quad &\Iff\quad\boolval{\sigma\in\tau}\in U.
\end{align*}
The equivalence relation $=_U$ is a congruence with respect to $\in_U$, and one may form the Boolean quotient $V^\B/U$ to consist of the equivalence classes
 $$[\sigma]_U=\set{\tau\in V^\B\mid\sigma=_U\tau},$$
with the membership relation induced by $\in_U$
 $$[\sigma]_U\barin[\tau]_U\quad\Iff\quad \sigma\in_U\tau\quad\Iff\quad\boolval{\sigma\in\tau}\in U.$$
Kindly note that the quotient structure $V^\B/U$ is not generally the same as or isomorphic to the result of the recursively defined value assignments $\val(\tau,U)=\set{\val(\sigma,U)\mid \sigma\in_U \tau}$, a construction that works properly only when $U$ is $V$-generic.

With the quotient construction as above, the relevant \Los\ theorem establishes
 $$V^\B/U\satisfies\varphi\quad\text{ if and only if }\quad\boolval{\varphi}\in U.$$
Thus, any statement forced by $\B$ is true in the structure $\<V^\B/U,\barin>$, which is a structure definable in $V$ from parameters $\B$ and $U$. In this way, the forcing construction can be seen to provide an interpretation of the theory that is forced. The method therefore reveals a robust mutual interpretability phenomenon in set theory---the theories $\ZFC$ and $\ZFC+\neg\CH$, for example, are not merely equiconsistent but mutually interpretable, and there is no need to undertake forcing with countable transitive models or to undertake metatheoretic argument with the reflection theorem, as one commonly sees in older treatments of forcing.

The Boolean ultrapower embedding now arises from this situation by considering more carefully the ground model predicate $\check V$ and its quotient by the ultrafilter $U$. Namely, let $\Vbar=\check V/U$, by which I mean the class of equivalence classes $[\tau]_U$ of names $\tau\in V^\B$ for which $\boolval{\tau\in\check V}\in U$. This is not the same as the class of check names $[\check x]_U$, because some names $\tau$ can be mixtures of such check names along a maximal antichain that is not met by $U$, which exists since $U$ is not generic, and these individuals $[\tau]_U$ will be in $\Vbar$ but not of the form $[\check x]$ for any $x$.\goodbreak

Nevertheless, the map
 $$j:V\to\Vbar\qquad\qquad j:x\mapsto[\check x]_U$$
is an elementary embedding $j:V\to \Vbar$ known as the \emph{Boolean ultrapower} map, explicated at length in \cite{HamkinsSeabold:BooleanUltrapowers}.\footnote{Unfortunately, the Boolean ultrapower map is less well known than it deserves. It appears to be missing, for example, in all the standard accounts of the Boolean-valued-model approach to forcing, which give a full account of $V^\B/U$, but do not mention the Boolean ultrapower map. Furthermore, experienced researchers deeply familiar with $V^\B$ and $V^\B/U$ sometimes appear unaware of the underlying elementary embedding $j_U:V\to\check V/U$, a situation at times exacerbated by regrettable confusion concerning whether $U$ must be generic in the construction of $V^\B/U$.}

Finally, the equivalence class of the canonical name $\dot G$ for the generic filter is represented by the object $G=[\dot G]_U$ in $V^\B/U$, with a full Boolean value asserting that it is generic, since $\boolval{\dot G\intersect \check D\neq\emptyset}=1$ for every dense set $D\of\B$ and consequently $\boolval{\dot G\text{ is }\check V\text{-generic}}=1$. Furthermore, $\boolval{\sigma=\val(\check\sigma,\dot G)}=1$ for every $\B$-name $\sigma$, which shows that $V^\B$ thinks every object is the interpretation of a name by $\dot G$ and that the universe is a forcing extension of the ground model $\check V$ by $\dot G$. It follows by the \Los\ theorem that $G$, defined as the equivalence class $[\dot G]_U$ (which exists in $V$) is $\Vbar$-generic for the forcing $j(\B)$, as desired.
\end{proof}

For emphasis, let me state again categorically that although we have mentioned a forcing notion $\B$, nevertheless the models $\<\Vbar,\barin>$ and $\<\Vbar[G],\barin>$ and the map $j:V\to\Vbar$ are definable classes in $V$, definable with $\B$ and $U$ as parameters. There is no need in this construction to form any actual forcing extension of the set-theoretic universe $V$.

The following special instance of the Boolean ultrapower theorem will play a central role in the later applications.

\begin{corollary}\label{Corollary.Boolean-ultrapower-kappa}
 For any cardinal $\kappa$, there is a definable elementary embedding of the set-theoretic universe $V$ to a definable class model $\<\Vbar,\barin>$ for which there is (in $V$) a $\Vbar$-generic filter $G$ for the forcing to collapse $j(\kappa)$ to become countable.
 $$j:V\to\Vbar\of\Vbar[G]$$
The cardinal $j(\kappa)$ is thus countable in $\Vbar[G]$, and every structure $M$ in $V$ of size $\kappa$ has $j(M)$ countable in $\Vbar[G]$.
\end{corollary}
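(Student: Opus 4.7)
The plan is to apply the Boolean ultrapower theorem directly with a specific choice of forcing. First I would take $\B=\RO(\Coll(\omega,\kappa))$, the complete Boolean algebra associated with the Lévy collapse forcing that makes $\kappa$ countable. Theorem~\ref{Theorem.Boolean-ultrapower} then immediately produces, in $V$, a definable elementary embedding $j:V\to\Vbar$ together with a $\Vbar$-generic filter $G$ for $j(\B)$, all definable from $\B$ and any chosen ultrafilter $U\of\B$.

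Next I would observe that everything reduces to elementarity. Since $V$ satisfies ``$\B$ is the complete Boolean algebra of the forcing to collapse $\kappa$ to $\omega$,'' the model $\Vbar$ satisfies the corresponding statement about $j(\B)$ and $j(\kappa)$. Hence $j(\B)$ is, inside $\Vbar$, the complete Boolean algebra of the forcing collapsing $j(\kappa)$ to $\omega$, and the $\Vbar$-generic filter $G$ witnesses that $j(\kappa)$ is countable in the extension $\Vbar[G]$. The inclusion $\Vbar\of\Vbar[G]$ is part of the output of Theorem~\ref{Theorem.Boolean-ultrapower}.

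For the last sentence, I would note that if $M\in V$ is any structure of size $\kappa$, then $V\satisfies |M|\leq\kappa$, so by elementarity $\Vbar\satisfies |j(M)|\leq j(\kappa)$. Since $j(\kappa)$ is countable in $\Vbar[G]$, so is $j(M)$. No genuine obstacle arises; the only care needed is to emphasize, as in the theorem, that $G$ is constructed in $V$ as the equivalence class $[\dot G]_U$ and the entire picture---$j$, $\Vbar$, and $\Vbar[G]$---is a definable class configuration inside $V$, with no actual forcing extension of $V$ required.
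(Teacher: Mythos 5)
Your proof is correct and takes essentially the same route as the paper's own: apply the Boolean ultrapower theorem to the collapse forcing $\B=\Coll(\omega,\kappa)$ with an arbitrary ultrafilter $U\in V$, obtaining $j:V\to\Vbar\of\Vbar[G]$ with $G=[\dot G]_U$ a $\Vbar$-generic filter on $j(\B)$ collapsing $j(\kappa)$. Your only additions---explicitly passing to the complete Boolean algebra $\RO(\Coll(\omega,\kappa))$ and spelling out the elementarity step $V\satisfies|M|\leq\kappa\Implies\Vbar\satisfies|j(M)|\leq j(\kappa)$ for the final claim---are details the paper leaves implicit.
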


\begin{proof}
Let $\B=\Coll(\omega,\kappa)$ be the forcing notion to collapse $\kappa$ to $\omega$, and let $U\of\B$ be any ultrafilter on this Boolean algebra in $V$. By the Boolean ultrapower theorem, the corresponding Boolean ultrapower map $j:V\to\Vbar\of\Vbar[G]$ is an elementary embedding of $V$ into the model $\Vbar=\check V/U$, and $G=[\dot G]_U$ is a $\Vbar$-generic filter on $j(\B)$, which collapses $j(\kappa)$ to become countable.
\end{proof}

\section{Pseudo-countable models}

I define that a mathematical structure $M$ is \emph{pseudo countable}, if it is isomorphic to a structure seen as countable inside some Boolean-quotient model $V^\B/U$, where $\B=\Coll(\omega,\kappa)$ is the forcing to collapse some cardinal $\kappa$ to become countable and $U$ is an ultrafilter on $\B$ in $V$. This includes every countable structure, since in the case $\kappa=1$ the forcing $\B$ is trivial and so $V^\B/U$ is isomorphic to $V$ itself. In light of corollary \ref{Corollary.Boolean-ultrapower-kappa} and theorem \ref{Theorem.Every-model-extends-to-psuedo-countable-model} below, however, there are pseudo-countable structures of arbitrarily large uncountable size.

\begin{theorem}\label{Theorem.Every-model-extends-to-psuedo-countable-model}
 Every structure $M$ in a finite language, including uncountable structures of enormous cardinality, has an elementary extension to a pseudo countable structure $\Mbar$.
\end{theorem}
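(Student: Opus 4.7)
The plan is to apply Corollary \ref{Corollary.Boolean-ultrapower-kappa} at $\kappa=|M|$ (or any cardinal $\geq |M|$). This produces a Boolean ultrapower embedding $j:V\to\Vbar\of\Vbar[G]$ arising from the collapse forcing $\B=\Coll(\omega,\kappa)$ and some ultrafilter $U\in V$ on $\B$, where $\Vbar=\check V/U$ and $\Vbar[G]$ is the full Boolean quotient $V^\B/U$, with $G=[\dot G]_U$. By the corollary, the image $j(M)$, a structure living in $\Vbar$, is countable in $\Vbar[G]=V^\B/U$. This already shows that $j(M)$ itself fits the pseudo-countability definition; the remaining task is to use $j(M)$ to manufacture a genuine elementary extension of $M$.

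For that, I would use the fact that $j:V\to\Vbar$ is elementary and $M$ has finite signature, so the restriction $j\restriction M$ is an elementary embedding of $M$ into $j(M)$ in the language of $M$. To convert this elementary embedding into a literal elementary extension $M\elesub\Mbar$, I would simply rename: define $\Mbar$ to have underlying set $M\union (j(M)\setminus j[M])$, and transport the relations and functions of $j(M)$ back along the bijection $\pi:\Mbar\to j(M)$ that acts as $j$ on $M$ and as the identity on the new elements. Then $\pi$ is an isomorphism of $\Mbar$ with $j(M)$, and under it $M$ corresponds to $j[M]\elesub j(M)$, so $M\elesub\Mbar$ as required.

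To verify pseudo-countability of $\Mbar$, observe that $\Mbar\iso j(M)$ by construction, and $j(M)$ is an element of the Boolean quotient model $V^\B/U$ in which it is countable, witnessed by any surjection $\omega\surj j(M)$ given by the collapse filter $G$ using that $|j(M)|^{\Vbar}=j(\kappa)$ has been collapsed. This is exactly the definition given, so $\Mbar$ is pseudo-countable.

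I do not expect a real obstacle here, since Corollary \ref{Corollary.Boolean-ultrapower-kappa} does the heavy lifting. The one subtlety worth flagging is the role of the finite-language hypothesis: it ensures that $M$ is packaged as a single set together with finitely many relations and functions, so that $j(M)$ automatically is a structure in the same language, with $j$ acting uniformly on the interpretations; for an infinite language one would have to track the action of $j$ on the signature itself, and in particular $j(M)$ might be a structure in $j(L)$ rather than $L$, requiring an additional step.
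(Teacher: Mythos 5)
Your proposal is correct and follows essentially the same route as the paper: apply Corollary~\ref{Corollary.Boolean-ultrapower-kappa} with the collapse of $|M|$, observe that $j\restrict M:M\to j(M)$ is elementary with $j(M)$ countable in $V^\B/U$, and identify $M$ with its image to obtain $M\elesub\Mbar\iso j(M)$. Your explicit renaming construction and the finite-language caveat merely spell out what the paper handles with the phrase ``by identifying $M$ with its image'' and in its subsequent discussion of the signature $j(\mathcal{L})$.
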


\begin{proof}
 For any structure $M$ of any cardinality, let $\B$ be the forcing to collapse $M$ to become countable, and let $U\of\B$ be any ultrafilter. Consider the associated Boolean ultrapower embedding
  $$j:V\to\Vbar\of\Vbar[G]=V^\B/U$$
 as in corollary \ref{Corollary.Boolean-ultrapower-kappa}. Since $j$ is elementary from $V$ to $\Vbar$, it follows that $j\restrict M:M\to j(M)$ is an elementary embedding, since if $M\satisfies\varphi[a]$ then this is visible in $V$ and hence $j(M)\satisfies\varphi[j(a)]$ inside $\Vbar$. And since $j(M)$ is countable in $\Vbar[G]$, the structure $j(M)$ is pseudo countable. By identifying $M$ with its image, we may thus find an elementary extension of $M\elesub\Mbar\iso j(M)$ to a pseudo countable structure, as desired.
\end{proof}

There is a certain subtlety regarding the language of the structures $M$ and $\Mbar$. Namely, if $M$ is an $\mathcal L$-structure, then of course $\Mbar=j(M)$ becomes a $j({\mathcal L})$-structure in $\Vbar$, since the linguistic resources of the language signature may be transformed by $j$. For this reason, it would have been more correct for me to write $j(M)\satisfies j(\varphi)[j(a)]$ in the proof of theorem \ref{Theorem.Every-model-extends-to-psuedo-countable-model}, since the formula $\varphi$ is transformed by $j$ to a $j({\mathcal L})$ assertion $j(\varphi)$. Nevertheless, allowing this kind of transformation of the language, we may regard $\Mbar$ as an $\mathcal L$ structure, replacing each component of the signature of $\mathcal L$ with its associated version in $j({\mathcal L})$. This way of thinking is especially successful when $\mathcal L$ is a finite language, as I had stated in the theorem, for in this case all the relation, function, and constant symbols of $j({\mathcal L})$ arise as translations of a corresponding element of $\mathcal{L}$. If the language $\mathcal{L}$ is infinite, however, even just countably infinite, then with nontrivial forcing the critical point of $j$ will necessarily be $\omega$, and so there will be new natural numbers in $\Vbar$ not in the range of $j$. In this case there will consequently also be new components of $j(\mathcal{L})$ not arising directly as a correspondent of some language component of $\mathcal{L}$. That is, the structure $\Mbar$ as a $j(\mathcal{L})$-structure interprets parts of the signature not directly analogous to any particular component of $\mathcal{L}$. Nevertheless, in such cases we can still view $\Mbar$ as an $\mathcal{L}$-structure simply by restricting to the language arising from $j\image\mathcal{L}$. The subtlety is that this reduct structure, however, does not exist in $\Vbar$. Nevertheless, there is no need actually to perform the reduction like that inside $\Vbar$, if one wants only to see that $M\satisfies\varphi[a]\Implies\Mbar\satisfies\varphi[j(a)]$, since the formula $\varphi$ uses only finitely many of the resources, and so it suffices to take the reduct to the finite language appropriate for $j(\varphi)$, which involves only the parts of the language $\mathcal{L}$ arising from $\mathcal{L}$. So let us keep these subtleties about the language in the back of our minds as we proceed.

Let me mention further that this feature of $\Vbar$ having new natural numbers is of course a necessary feature of the main conclusion of theorem \ref{Theorem.Every-model-extends-to-psuedo-countable-model} in the case that $M$ is uncountable, since in order for $j(M)$ to become countable in $\Vbar[G]$, which is a class in $V$, it must be that the predecessors of $\omega$ in $\Vbar$ are equinumerous in $V$ with the predecessors of $j(M)$, of which there are at least $|M|$ many. So in fact, there will be at least $|M|$ many natural numbers in $\Vbar$, even though $\Vbar$ views them as the (countable) set of natural numbers $\omega^\Vbar$. In short, the reason $j(M)$ is able to become countable in $\Vbar[G]$ is because $\omega^{\Vbar}$ was pumped up to the same size.

\section{The multiverse of pseudo-countable models}

Let $\mathcal{M}$ be the class of all such pseudo countable models of \ZF\ set theory, a class I shall refer to as the \emph{multiverse of pseudo countable models of set theory}. 

Notice the subtle point that a model $M$ of $\ZF$ is pseudo-countable when it is seen as countable inside some $V^\B/U$, but this doesn't mean that $V^\B/U$ needs to agree that it is a model of \ZF, since perhaps it only satisfies some nonstandard fragment of $\ZF$ in $V^\B/U$. When the forcing is nontrivial, the model $V^\B/U$ is necessarily $\omega$-nonstandard, after all, and so it has a nonstandard understanding of the theory \ZF. For example, $V^\B/U$ will think that a given model $M$ of \ZF\ will satisfy the reflection theorem using nonstandard finite fragments of its version of \ZF, so that $(V_\alpha)^M$ will not be models of \ZF\ in the sense of $V^\B/U$, even though they actually satisfy every axiom of \ZF.

\begin{theorem}\label{Theorem.Multiverse-of-pseudo-countable-models}
Let $\mathcal{M}$ be the multiverse of pseudo countable models of \ZF\ set theory.
 \begin{enumerate}
   \item $\mathcal{M}$ is closed under forcing, in that if $M\in\mathcal{M}$ and $\P$ is a notion of forcing in $M$, then there is some $M$-generic filter $G\of\P$ for which the forcing extension $M[G]$ is also in $\mathcal{M}$.
   \item $\mathcal{M}$ is also closed under class forcing, using any class forcing notion $\P$ definable in such an $M$. 
   \item $\mathcal{M}$ is closed under inner models, in that if $M\in\mathcal{M}$ and $W$ is a definable inner model of $M$ and a model of \ZF, then $W\in\mathcal{M}$.
   \item More generally, $\mathcal{M}$ is closed under interpreted models, in that if $M\in\mathcal{M}$ and $W$ is a model of \ZF\ set theory that is interpreted in $M$, then $W\in\mathcal{M}$.
 \end{enumerate}
\end{theorem}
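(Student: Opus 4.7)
The plan is to work inside a single Boolean-quotient model $\mathcal{V}=V^\B/U$ that already witnesses the pseudo countability of the given $M$, and to carry out all four constructions there, exploiting the fact that in $\mathcal{V}$ the structure $M$ is just a countable set. Fix $M\in\mathcal{M}$; after replacing $M$ by an isomorphic copy I may assume that $M$ is literally a structure that $\mathcal{V}$ sees as countable. By \Los's theorem every axiom of \ZFC\ holds in $\mathcal{V}$, so the standard ZFC fact that any countable collection of dense subsets of a forcing notion admits a meeting filter is available internally in $\mathcal{V}$.

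For part (1), given $\P\in M$, the dense subsets of $\P$ belonging to $M$ form a subset of the countable set $M$ and are thus countable in $\mathcal{V}$; hence $\mathcal{V}$ builds an $M$-generic filter $G\of\P$ and then forms $M[G]=\set{\val_G(\tau)\mid \tau\in M^\P}$, which remains countable in $\mathcal{V}$. That $M[G]$ really does satisfy \ZF\ is certified externally, axiom by axiom, by the genuine forcing theorem applied to the actual model $M$ and the actual generic $G$ (genericity for any specific dense set being absolute between $V$ and $\mathcal{V}$), so $M[G]\in\mathcal{M}$. Part (2) runs the same course: although $\P$ is a proper class of $M$, from the vantage of $\mathcal{V}$ the entire structure $M$ is a mere countable set, and so $\P$, its $M$-definable dense subclasses, and the generic $G$ are all sets in $\mathcal{V}$ reachable by the same diagonalization; provided the class forcing is of a type that preserves \ZF\ (as in the familiar pretame/tame cases implicit in the statement), the resulting extension $M[G]$ is again countable in $\mathcal{V}$ and hence pseudo countable.

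For parts (3) and (4), if $W$ is a definable inner model of $M$, or more generally a model interpreted in $M$ (a definable subclass, possibly modulo a definable equivalence relation, equipped with definable relations and functions), then the domain of $W$ is defined from $M$ by a first-order formula and so, in $\mathcal{V}$, is merely a subset---or a quotient of a subset---of the countable set $M$. In either case $W$ becomes a countable structure in $\mathcal{V}$, and since $W$ is by hypothesis a \ZF\ model, we obtain $W\in\mathcal{M}$ via the very same $\mathcal{V}$.

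The principal point of caution, already flagged in the preceding section, is that $\mathcal{V}$ is $\omega$-nonstandard, so $\mathcal{V}$ will generally not agree with us that $M$, $M[G]$, or $W$ satisfies ``\ZF'' in its own (nonstandard) sense. The plan therefore does \emph{not} use $\mathcal{V}$'s internal verification of the \ZF\ axioms; it uses $\mathcal{V}$ only to establish countability and to supply the relevant generic filter or reduct, with each actual \ZF\ axiom verified externally via the absoluteness of standard formulas between $V$ and $\mathcal{V}$ when evaluated on $M$. This division of labour---\ZF\ checked in $V$, countability checked in $\mathcal{V}$---is the key to the argument and is the only real obstacle to navigate.
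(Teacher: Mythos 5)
Your proposal is correct and takes essentially the same approach as the paper, whose proof likewise performs all four constructions inside the single Boolean quotient $V^\B/U$ in which $M$ is seen as countable, just as one would with genuinely countable models. Your explicit division of labour---countability and the construction of generics supplied internally by $V^\B/U$, while the actual \ZF\ axioms of $M[G]$ or $W$ are verified externally in $V$ because $V^\B/U$ is $\omega$-nonstandard---spells out carefully a point the paper flags in the discussion preceding the theorem but leaves implicit in its brief proof.
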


\begin{proof}
These closure properties are all very clear upon reflection, since if $M$ is pseudo countable, then it is a countable model of set theory inside some Boolean quotient $V^\B/U$, in which case we can also construct all the various forcing extensions and interpreted models, just as we would with countable models, by applying the standard constructions within that world. So the multiverse of pseudo countable models will exhibit all these closure properties.
\end{proof}

In subsequent joint work, Victoria Gitman and I are investigating the multiverse of nontrivially pseudo-countable models as a possible model of the multiverse axioms in the style of \cite{GitmanHamkins2010:NaturalModelOfMultiverseAxioms,Hamkins2012:TheSet-TheoreticalMultiverse}. In this article, however, I take theorems \ref{Theorem.Every-model-extends-to-psuedo-countable-model} and \ref{Theorem.Multiverse-of-pseudo-countable-models} to establish, assuming the consistency of \ZFC, that the multiverse of psuedo-countable models of set theory is extremely rich, while containing many models of arbitrarily large cardinality.\goodbreak

\section{Generalizing results from countable models to the uncountable}

Let me now explain how the Boolean ultrapower construction and the multiverse of psuedo-countable models enables a uniform generalization of results about countable models to the uncountable. The specific idea is that, precisely because the pseudo-countable models are seen as countable inside the various Boolean quotient models $V^\B/U$, they will be subject in those worlds to the countable-model theorems such as the Barwise extension theorem, the resurrection theorem, and the universal finite sequence theorems. In this way, we will achieve versions of those theorems for the pseudo-countable models, which by theorem \ref{Theorem.Every-model-extends-to-psuedo-countable-model} includes models of arbitrarily large uncountable size, extending any given model.

Recall that a model of set theory $N$ is an \emph{end-extension} of another model $M$, if $M$ is a submodel of $N$ and no set in $M$ gains new members in $N$, so that $a\in^N b\in M\implies a\in M$ and $a\in^M b$. The model $N$ is a \emph{top-extension} of $M$ (also known as a \emph{rank-extension}), in contrast, if furthermore all new individuals of $N$ have higher rank than any ordinal in $M$; in other words, the cumulative hierarchy as computed in $N$ agrees on those ordinals with the cumulative hierarchy of $M$, in that $(V_\alpha)^N=(V_\alpha)^M$ for every ordinal $\alpha$ in $N$.

\subsection{The Barwise extension theorem for pseudo-countable models}

The Barwise extension theorem \cite{Barwise1971:InfinitaryMethodsInTheModelTheoryOfSetTheoryLC69} asserts that every countable model of set theory $M\satisfies\ZF$ has an end-extension $M\of N$ to a model $\<N,\in^N>$ of $\ZFC+V=L$. The theorem does not hold for uncountable models, since if $M$ is well-founded beyond true $\omega_1$, then no end extension will collapse this ordinal and consequently there will be no possibility of changing which reals are thought to be constructible; thus, if there are nonconstructible reals in $M$, then will remain nonconstructible in any end-extension. Nevertheless, the Barwise extension theorem does hold amongst the pseudo-countable models.

\begin{theorem}\label{Theorem.Barwise-uncountable}
 The Barwise extension theorem holds for all pseudo-countable models.
 \begin{enumerate}
   \item Every pseudo-countable model $M\satisfies\ZF$ has an end extension to a pseudo-countable model of $\ZFC+V=L$.
   \item Consequently, every model $M\satisfies\ZF$ has an elementary extension to a model with an end extension satisfying $\ZFC+V=L$.
      $$\forall M\satisfies\ZF\ \exists\Mbar,N\ (M\elesub\Mbar\of_e N\satisfies\ZFC+V=L)$$
   \item Equivalently, every model $M\satisfies\ZF$ admits a $\Delta_0$-elementary embedding into a model of $\ZFC+V=L$.
 \end{enumerate}
\end{theorem}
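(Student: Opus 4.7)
The plan is to apply the classical Barwise extension theorem inside the Boolean-quotient model $V^\B/U$ witnessing pseudo-countability: there our structure is genuinely countable, so Barwise applies internally, and the resulting end-extension will transfer back to $V$ because the relevant preservation facts are $\Delta_0$ and absolute. I would prove (1) first, derive (2) from (1) by composing with Theorem~\ref{Theorem.Every-model-extends-to-psuedo-countable-model}, and treat (3) as a formal reformulation of (2).

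For part (1), start with a pseudo-countable $M\satisfies\ZF$, witnessed by an isomorphism $M\iso M^*$ where $M^*\in V^\B/U$ is seen there as countable, for $\B=\Coll(\omega,\kappa)$. Since every ZFC axiom has Boolean value $1$ in $V^\B$, we have $V^\B/U\satisfies\ZFC$, so $V^\B/U$ proves the Barwise extension theorem. For each standard axiom $\varphi$ of ZF, the external fact $M^*\satisfies\varphi$ (holding because $M\iso M^*$ and $M\satisfies\ZF$) coincides with the internal assertion $V^\B/U\satisfies(M^*\satisfies\varphi)$, since evaluation of a fixed formula over the set $M^*$ is a bounded recursion absolute between $V$ and $V^\B/U$. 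Applying Barwise internally in $V^\B/U$ to the countable model $M^*$ yields an end-extension $N^*\in V^\B/U$, still countable there, with $V^\B/U\satisfies(N^*\satisfies\ZFC+V=L)$. By the same absoluteness, $N^*$ satisfies every standard axiom of $\ZFC+V=L$ externally, and the end-extension relation $M^*\of_e N^*$, being $\Delta_0$, transfers externally. Pulling back through $M\iso M^*$ produces the desired pseudo-countable end extension of $M$.

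Part (2) then follows immediately: given any $M\satisfies\ZF$, apply Theorem~\ref{Theorem.Every-model-extends-to-psuedo-countable-model} to obtain a pseudo-countable elementary extension $M\elesub\Mbar\satisfies\ZF$, then apply (1) to $\Mbar$, yielding the chain $M\elesub\Mbar\of_e N\satisfies\ZFC+V=L$. Part (3) is a natural reformulation of (2): the composition $M\elesub\Mbar\of_e N$ is $\Delta_0$-elementary from $M$ to $N$, because elementary embeddings preserve all formulas and end-extensions preserve $\Delta_0$ formulas in both directions; conversely, a $\Delta_0$-elementary embedding $j\colon M\to N$ factors through its image as an isomorphism onto an end-substructure of $N$, which after identification gives the required chain.

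The main obstacle is the subtle interaction between internal and external satisfaction: because $V^\B/U$ is $\omega$-nonstandard, its internal scheme of ZF contains nonstandard formula-instances, so the step invoking internal Barwise on $M^*$ requires that $V^\B/U$ see $M^*$ satisfying its full internal ZF, not merely the standard axioms. This will be handled either by arguing that the Barwise construction can be formalized from a finite fragment of ZFC whose standard instances are preserved from external to internal satisfaction, or by strengthening the pseudo-countability witness so that $M^*=j(M_0)$ for some $M_0\in V$ with $M_0\satisfies\ZF$, in which case full internal satisfaction of $j(\ZF)$ in $M^*$ follows from elementarity of $j$. Either route secures the internal application of Barwise, after which the external end-extension and satisfaction of $\ZFC+V=L$ descend by the bounded-quantifier absoluteness described above.
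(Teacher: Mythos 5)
Your treatment of statements (1) and (2) follows the paper's own proof: apply the Barwise extension theorem internally in $V^\B/U$, where the model is genuinely countable, observe that internal end-extension and internal satisfaction of standard axioms transfer back to $V$, and obtain (2) by composing with theorem \ref{Theorem.Every-model-extends-to-psuedo-countable-model}. Your worry about the $\omega$-nonstandardness of $V^\B/U$ is legitimate, and the paper itself acknowledges it (handling the analogous issue for Keisler--Morley and the resurrection theorem via ``sufficient fragments'' and the suitable-theory machinery of \cite{HamkinsWilliams2021:The-universal-finite-sequence}); your first route---formalizing the internal application from fragments whose standard instances pass from external to internal satisfaction---is the right one. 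Your second route, however, cannot replace it: requiring $M^*=j(M_0)$ for some $M_0\in V$ with $M_0\satisfies\ZF$ strengthens the hypothesis of (1), proving it only for a subclass of the pseudo-countable models. The models $N$ produced by the theorem itself, or forcing extensions formed inside $V^\B/U$, are countable there without being of the form $j(M_0)$, yet statement (1) is asserted for them as well---this is exactly what permits iterating the extension theorem within the multiverse of pseudo-countable models, as the paper emphasizes.

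The genuine gap is in your converse direction of (3). You claim that a $\Delta_0$-elementary embedding $j\colon M\to N$ ``factors through its image as an isomorphism onto an end-substructure of $N$,'' but $j[M]$ need not be downward closed in $N$: if $M\elesub\Mbar$ is any non-end elementary extension (an ultrapower, say, adding new elements below $\omega^M$) and we take $N=\Mbar$, then $j$ is fully elementary, yet the new nonstandard numbers lie $\in^N$-below $j(\omega^M)$ without belonging to $j[M]$, so $j[M]\of_e N$ fails. The correct argument, and the one the paper gives, is to let $\Mbar$ be the downward closure of $j[M]$ in $N$; then $N$ end-extends $\Mbar$, the inclusion $j[M]\of\Mbar$ is cofinal and $\Delta_0$-elementary, and Gaifman's theorem \cite{Gaifman1974:ElementaryEmbeddingsOfModelsOfSetTheoryAndCertainSubtheories} upgrades a cofinal $\Delta_0$-elementary extension of a model of \ZF\ to a fully elementary one, yielding the chain $M\elesub\Mbar\of_e N$ that (2) requires.
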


\begin{proof}
For statement (1), consider any pseudo-countable model $M\satisfies\ZF$. So $M$ is seen as countable inside some Boolean quotient model $V^\B/U$, where $\B$ is collapse forcing of some cardinal to $\omega$. Since the Barwise extension theorem holds inside $V^\B/U$, there is a countable end extension $M\of_e N\satisfies\ZFC+V=L$ inside $V^\B/U$. Since $V^\B/U$ thinks $N$ is an end-extension of $M$, it really is, and so we have fulfilled the Barwise extension theorem inside the class of pseudo-countable models.

For statement (2), we simply apply theorem \ref{Theorem.Every-model-extends-to-psuedo-countable-model} before applying the previous argument. The model $M$ has an elementary extension $M\elesub\Mbar$ to a pseudo-countable model $\Mbar$, which has the desired end extensions $N$.

To prove statement (3), simply compose the elementary embedding $M\elesub\Mbar$ with the end-extension $\Mbar\of_e N$ to have such a $\Delta_0$-elementary embedding. Conversely, if $M$ admits a $\Delta_0$-elementary extension $M\elesub_{\Delta_0} N\satisfies\ZFC+V=L$, then let $\Mbar$ be the downward closure of $M$ in $N$, which by Gaifman's theorem \cite{Gaifman1974:ElementaryEmbeddingsOfModelsOfSetTheoryAndCertainSubtheories} is a cofinal elementary extension of $M$ and end-extended to $N$. So the properties stated in (2) and (3) are equivalent.\end{proof}\goodbreak

Thus we have generalized the Barwise extension theorem far beyond the countable models, to the multiverse of pseudo-countable models, a multiverse of models of arbitrarily large uncountable size, including elementary extensions of any given model of set theory. 

I should like to remark that the particular consequences stated in (2) and (3) are also easily proved without the Boolean ultrapower method by means of a simple model-theoretic compactness argument. Namely, if we consider any uncountable model $M\satisfies\ZF$, then let $T$ be the theory consisting of the $\Delta_0$-elementary diagram of $M$ together with $\ZFC+V=L$. This theory is finitely consistent, since any finite subtheory mentions only finitely many constants from $M$, and so that part of the diagram is true in some countable elementary substructure of $M$, which by the Barwise extension theorem has an end-extension to a model of $\ZFC+V=L$. So any given finite piece of the theory (and even any countable piece) is realized in such an extension. So the theory has a model $N$, which satisfies $\ZFC+V=L$, and $M$ has a $\Delta_0$-elementary embedding into $N$. As argued earlier with Gaifman's theorem, the model $\Mbar$ consisting of the downward closure in $N$ of the constants naming elements of $M$ will be an elementary extension of $M$, and $N$ is an end-extension of $\Mbar$, realizing the desired extension.

The real content of theorem \ref{Theorem.Barwise-uncountable}, therefore, is statement (1), establishing the Barwise end extension property for the class of pseudo-countable models. One can iterate the result within that class. For example, one can extend any given model $M\satisfies\ZF$ to a pseudo-countable model $\Mbar$, afterward end extending to a model $N\satisfies V=L$, and then perform forcing $N[G]$ and then again end-extending to $N\of_e\Nbar\satisfies V=L$, doing so iteratively while staying within the class of pseudo-countable models.

\subsection{The Kiesler-Morley theorem for pseudo-countable models}

The Keisler-Morley \cite{KeislerMorley1968:ElementaryExtensionsOfModelsOfSetTheory} theorem asserts that every countable model $M\satisfies\ZF$ admits a nontrivial elementary end extension $M\elesub M^+$. This theorem does not hold for all uncountable models. For example, if $\kappa$ is the least inaccessible cardinal, then $V_\kappa$ has no elementary end extension. Meanwhile, the theorem does hold for all pseudo-countable models.

\begin{theorem}
  The Kiesler-Morley theorem holds for all pseudo-countable models of set theory. Every pseudo-countable $M\satisfies\ZF$ admits an elementary end extension $M\elesub M^+$.
\end{theorem}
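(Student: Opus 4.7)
The plan is to follow the template of Theorem \ref{Theorem.Barwise-uncountable}(1), transposing the Keisler-Morley theorem up into the pseudo-countable multiverse by working inside a Boolean quotient. Given a pseudo-countable $M\satisfies\ZF$, fix a Boolean algebra $\B=\Coll(\omega,\kappa)$ and an ultrafilter $U$ on $\B$ in $V$ witnessing this, so that a copy of $M$ lives inside the quotient model $V^\B/U$ as a countable structure. Since every axiom of $\ZFC$ has Boolean value $1$, the quotient $\<V^\B/U,\barin>$ is a model of $\ZFC$, and in particular the Keisler-Morley theorem is available there.

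First I would apply the Keisler-Morley theorem internally in $V^\B/U$ to the countable structure $M$, obtaining an elementary end extension $M\elesub M^+$ inside $V^\B/U$. Next I would verify the absoluteness of the conclusion: elementarity is syntactic, expressible as a schema of first-order assertions about the pair of structures, and being an end-extension is a $\Delta_0$ condition on the two membership relations, so what $V^\B/U$ sees as an elementary end extension genuinely is one in $V$. Finally, $M^+$ is itself pseudo-countable, witnessed by the very same pair $(\B,U)$, since $V^\B/U$ sees it as a countable structure; hence $M^+$ lies in the pseudo-countable multiverse, as required.

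The main subtlety, just as in the Barwise case, is that for nontrivial $\B$ the quotient $V^\B/U$ is $\omega$-nonstandard, so its internal notion of ``$\ZF$'' incorporates nonstandard axioms which $M$ cannot be expected to satisfy from the inside. One therefore must invoke the version of Keisler-Morley that applies to any countable $\mathcal{L}$-structure satisfying the standard $\ZF$ axioms---for instance, the proof via an iterated ultrapower along a definable nonprincipal ultrafilter on the $M$-definable classes of ordinals uses only the genuine $\ZF$ axioms that $M$ really does satisfy, and this standard argument runs through inside $V^\B/U$ applied to $M$. This interpretive move is precisely the one already employed tacitly in the proof of Theorem \ref{Theorem.Barwise-uncountable}(1); once granted, the argument completes with no further obstacle, and as with the Barwise theorem one immediately deduces the corollary that every model of $\ZF$, pseudo-countable or not, has an elementary extension admitting a proper elementary end extension.
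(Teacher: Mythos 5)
Your proof is correct and takes essentially the same route as the paper, which likewise applies the Keisler--Morley theorem inside $V^\B/U$ where $M$ is seen as countable, with exactly your caveat that the theorem must be invoked for countable models of sufficient (possibly nonstandard) fragments of $\ZF$, since $V^\B/U$ is $\omega$-nonstandard and need not regard $M$ as a model of its full internal $\ZF$. Your additional checks---that elementarity and end-extension are absolute from $V^\B/U$ to $V$, and that $M^+$ is itself pseudo-countable via the same pair $(\B,U)$---are left implicit in the paper's two-sentence proof but are correct.
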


\begin{proof}
  If $M\satisfies\ZF$ is pseudo-countable, then $M$ is seen as a countable model of (a nonstandard fragment of) set theory inside some $V^\B/U$. Applying the Keisler-Morley theorem inside $V^\B/U$---and note that it does apply to sufficient fragments of \ZF---we find the desired elementary end extension $M\elesub M^+$ inside $V^\B/U$, as desired. 
\end{proof}

\subsection{The resurrection theorem for pseudo-countable models}

Kameryn Williams and I proved a certain generalization of the Barwise extension theorem in \cite{HamkinsWilliams2021:The-universal-finite-sequence}, namely, the inner-model resurrection theorem, asserting that for every countable model $M$ of \ZF\ set theory, any c.e. theory $\ZFbar$ extending $\ZF$ that holds in some inner model of $M$ is realized again in an end extension of $M$. (The result is highlighted in corollaries 9 and 10 of \cite{HamkinsWilliams2021:The-universal-finite-sequence}, in a stronger form than I have just stated here, and it is also an immediate consequence of the main theorem of that article.) In short, theories true in inner model of $M$, even if destroyed in $M$, can nevertheless be resurrected in an end extension of the model. The Barwise extension theorem, of course, is simply an instance of this with the theory $\ZFC+V=L$. 

The Boolean ultrapower method immediately extends the recurrence theorem from countable models to the pseudo-countable models. 

\begin{theorem}\label{Theorem.Resurrection-uncountable}
 The recurrence theorem holds for pseudo-countable models. Namely, if $M\satisfies\ZF$ is a pseudo-countable model of set theory, then any c.e. theory $\ZFbar$ extending \ZF\ that is true in some inner model $W$ of $M$ is also true again in a pseudo-countable end-extension $M\of_e N\satisfies\ZFbar$.
\end{theorem}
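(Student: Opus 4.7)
The plan is to follow the template established in Theorem~\ref{Theorem.Barwise-uncountable}: apply the Boolean ultrapower construction to place $M$ inside a Boolean quotient $V^\B/U$ in which $M$ appears countable, and then invoke the countable-model resurrection theorem of \cite{HamkinsWilliams2021:The-universal-finite-sequence} internally. Specifically, since $M\satisfies\ZF$ is pseudo-countable, fix $\B=\Coll(\omega,\kappa)$ and an ultrafilter $U$ on $\B$ so that $M$ is a countable structure inside $V^\B/U$. The inner model $W$ of $M$, being defined in $M$ by some formula, is itself an object of $V^\B/U$, as a subset of $M$.

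Next, I apply the resurrection theorem inside $V^\B/U$. Inside that model, $M$ is a countable model of a sufficient fragment of $\ZF$, and $W$ is an inner model of $M$ satisfying the c.e. theory $\ZFbar$ as internally defined by the same $\Sigma_1$ formula used externally. The theorem then yields, inside $V^\B/U$, a countable end-extension $M\of_e N$ with $N\satisfies\ZFbar$. Because end-extension is absolute between $V^\B/U$ and the external universe, and because the internally defined $\ZFbar$ contains every standard axiom, externally we obtain $M\of_e N$ with $N\satisfies\ZFbar$. Since $N$ is countable in $V^\B/U$, it is by definition pseudo-countable, as required.

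The main obstacle, exactly as in the earlier proofs of this section, is navigating the $\omega$-nonstandardness of $V^\B/U$: the internally-defined $\ZFbar$ in $V^\B/U$ may contain nonstandard axioms beyond those of the standard $\ZFbar$, and the hypothesis of the resurrection theorem requires $W$ to satisfy all of them inside $V^\B/U$. This is handled by arranging $M$ to be $j(M_0)$ and $W$ to be $j(W_0)$ for some $M_0,W_0$ in $V$ with $W_0\satisfies\ZFbar$ externally, where $j:V\to\Vbar\of V^\B/U$ is the Boolean ultrapower embedding; if necessary, one first applies Theorem~\ref{Theorem.Every-model-extends-to-psuedo-countable-model} to $M$ to arrange this. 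Then elementarity of $j$ transfers the external satisfaction $W_0\satisfies\ZFbar$ to the internal satisfaction $j(W_0)\satisfies j(\ZFbar)$ in $\Vbar$, and $j(\ZFbar)$ is precisely what $V^\B/U$ calls $\ZFbar$. Aside from this bookkeeping across the standard/nonstandard boundary, no new combinatorial ingredient is needed.
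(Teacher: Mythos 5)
Your overall template is the paper's own---internalize the countable-model resurrection theorem of \cite{HamkinsWilliams2021:The-universal-finite-sequence} inside the Boolean quotient $V^\B/U$ where $M$ is seen as countable---but your handling of the nonstandardness obstacle in the third paragraph has a genuine gap. Your transfer argument requires $M=j(M_0)$ and $W=j(W_0)$ for the Boolean ultrapower embedding $j$, so that elementarity carries the external satisfaction $W_0\satisfies\ZFbar$ to internal satisfaction of the internal theory. But a pseudo-countable model is by definition merely (isomorphic to) a structure seen as countable inside some $V^\B/U$; nothing guarantees it lies in the pointwise image of $j$. The multiverse closure theorem produces plenty of pseudo-countable models---forcing extensions, interpreted models, and end-extensions constructed \emph{inside} $V^\B/U$---that are not of the form $j(M_0)$ for any $M_0\in V$, and for these your elementarity transfer is simply unavailable. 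Your fallback, ``first apply Theorem \ref{Theorem.Every-model-extends-to-psuedo-countable-model} to $M$,'' does not repair this: it replaces $M$ with an elementary extension $\Mbar=j(M)$ and then end-extends $\Mbar$ rather than $M$, which proves only the analogue of statement (2) of Theorem \ref{Theorem.Barwise-uncountable}---a weaker ``one-off'' consequence that the paper explicitly observes is obtainable by a routine compactness argument with the $\Delta_0$-diagram, no Boolean ultrapower needed. The real content of the theorem is the end-extension of the given pseudo-countable $M$ itself, and your argument does not deliver that in general.

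The paper's resolution is different and is what you need here: the resurrection theorem of \cite{HamkinsWilliams2021:The-universal-finite-sequence} is proved for countable models of \emph{suitable} theories, a notion robust enough to apply to the possibly nonstandard fragments arising inside the $\omega$-nonstandard world $V^\B/U$. One does not need $W$ to satisfy the full internal $\ZFbar$. Since $M$ is a set in $V^\B/U$, satisfaction in $M$ (and hence relativized truth in the definable class $W$) is internally definable there; by the external hypothesis every standard axiom of $\ZFbar$ holds in $W$, so the internal set of bounds $k$ below which all internal $\ZFbar$-axioms hold in $W$ contains every standard number, and overspill yields a nonstandard such $k$. Applying the internal suitable-theory resurrection theorem to this internally bounded fragment of internal $\ZFbar$ produces, inside $V^\B/U$, a countable end-extension $M\of_e N$ satisfying that fragment, hence satisfying every standard axiom of $\ZFbar$ externally; and $N$, being countable in $V^\B/U$, is pseudo-countable. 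This works for an arbitrary pseudo-countable $M$, with no assumption that $M$ is in the range of $j$.
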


\begin{proof}
The recurrence theorem holds for the countable models inside any particular $V^\B/U$, with the result that any model of \ZF\ seen as countable in such a world will have the desired end-extension. (One needs to verify that the recurrence theorem holds for countable models of sufficient fragments of $\ZF$, since as I have mentioned, a pseudo-countable model of \ZF\ is countable inside some $V^\B/U$ and may satisfy only a nonstandard fragment of \ZF\ in that model, but this can be handled with the notion of \emph{suitable} theory in \cite{HamkinsWilliams2021:The-universal-finite-sequence}.) Every psuedo-countable model will thus be subject to the recurrence theorem as desired.
\end{proof}

Again it follows as in theorem \ref{Theorem.Barwise-uncountable} statement (2) that every model $M\satisfies\ZF$ with an inner model $W$ of a suitable theory $\ZFbar$ will have an elementary extension $M\elesub\Mbar$, which itself has an end-extension $\Mbar\of_e N$ to a model of the desired theory $N\satisfies\ZFbar$, or as in statement (3) that $M$ has a $\Delta_0$-elementary embedding into such a model $N$. This one-off consequence instance, however, is again easily established without the Boolean ultrapower method by means of a model-theoretic compactness argument. Namely, for any $M\satisfies\ZF$ with an inner model $W\satisfies\ZFbar$, let $T$ be the $T$ be the theory $\ZFbar$ together with the $\Delta_0$ diagram of $M$. This theory is finitely consistent, since any finite part of the diagram is realized in a countable submodel of $M$, to which the resurrection theorem applies, and so there is an end-extension realizing $\ZFbar$ and preserving that part of the diagram. So $T$ is consistent, and any model $N\satisfies T$ admits a $\Delta_0$-elementary embedding from $M$. By Gaifman's theorem, we can decompose this embedding into an elementary embedding $M\elesub\Mbar$ and an end-extension $\Mbar\of N$, as desired for the theorem.

\subsection{The universal finite sequence theorems for pseudo-countable models}

Let me now similarly generalize the various results on the universal finite sequences, extending the universal algorithm; see \cite{Woodin2011:A-potential-subtlety-concerning-the-distinction-between-determinism-and-nondeterminism, Hamkins:The-modal-logic-of-arithmetic-potentialism, BlanckEnayat2017:Marginalia-on-a-theorem-of-Woodin, Blanck2017:Dissertation:Contributions-to-the-metamathematics-of-arithmetic, HamkinsWoodin:The-universal-finite-set, HamkinsWilliams2021:The-universal-finite-sequence, Hamkins:Every-countable-model-of-arithmetic-or-set-theory-has-a-pointwise-definable-end-extension}. 

The first set-theoretic instance of the universal finite sequence phenomenon was the main result of \cite{HamkinsWoodin:The-universal-finite-set}, which shows that there is a $\Sigma_2$-definable \ZF-provably finite sequence $$a_0,a_1,\ldots,a_n$$ with the \emph{universal extension property} for top-extensions, meaning that in any countable model $M\satisfies\ZF$ in which the sequence is $s$, then for any finite extension $t\fo s$ of the sequence in $M$, there is a top-extension $M\of N$ such that the sequence defined in $N$ is $t$.\goodbreak

The main result of \cite{HamkinsWilliams2021:The-universal-finite-sequence} subsequently provided the $\Sigma_1$ analogue of that result, showing that there is a $\Sigma_1$-definable \ZF-provably finite sequence
 $$a_0,a_1,\ldots,a_n$$ with the universal extension property for end-extensions, meaning that in any countable model $M\satisfies\ZF$ in which the sequence is $s$, then for any finite extension $t\fo s$, there is an end-extension $M\of N$ such that the sequence defined in $N$ is $t$. In \cite{HamkinsWilliams2021:The-universal-finite-sequence} we also simultaneously achieved the resurrection property in the extension model $N$, so that $N\satisfies\ZFbar$ for any c.e. theory $\ZFbar$ extending \ZF\ and holding in some inner model of $M$.
 
Meanwhile, the result of \cite[theorem~8]{Hamkins:Every-countable-model-of-arithmetic-or-set-theory-has-a-pointwise-definable-end-extension} extended those results by providing a higher-complexity definition with the universal extension property for $\Sigma_m$-elementary end-extensions (note that these are top-extensions if $m\geq 1$). Namely, for any particular natural number $m$ and any c.e. theory $\ZFCbar$ extending $\ZFC$, there is a $\Sigma_{m+1}$-definable \ZF-provably finite sequence of ordinals
 $$\alpha_0,\alpha_1,\ldots,\alpha_n$$
with the universal extension property for $\Sigma_m$-elementary extensions, meaning that for countable model $M\satisfies\ZFCbar$ in which the sequence is $s$, then for every finite $t$ extending $s$ in $M$, there is a $\Sigma_m$-elementary end extension $M\elesub_{\Sigma_m} N\satisfies\ZFCbar$ in which the sequence defined is $t$. If one has $V=\HOD$, then the ordinal sequence can be used to define a universal finite sequence of arbitrary sets.

All these previous theorems establish the universal finite sequence result for countable models of set theory. Here, I extend the phenomenon to the class of pseudo-countable structures.

\begin{theorem}\label{Theorem.Sigma_m-universal-class-of-models}
The universal finite sequence theorems hold for all pseudo-countable models. Namely, for any particular natural number $m$ and any c.e. theory $\ZFCbar$ extending \ZFC, if $M$ is any pseudo-countable model of $\ZFCbar$ and the $\Sigma_{m+1}$-definable universal finite sequence in $\Mbar$ is $s$, then for any finite extension $t\fo s$ in $M$ there is a $\Sigma_m$-elementary end extension $\Mbar\elesub_{\Sigma_m}N$ to a pseudo-countable model $N$ in which the sequence defined is $t$.
\end{theorem}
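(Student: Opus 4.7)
The strategy follows the template of Theorems \ref{Theorem.Barwise-uncountable} and \ref{Theorem.Resurrection-uncountable}: push the countable-model version of the universal finite sequence theorem up through the Boolean quotient world in which the pseudo-countable model is seen as countable. Suppose $M\satisfies\ZFCbar$ is pseudo-countable, and fix a Boolean quotient $V^\B/U$, with $\B=\Coll(\omega,\kappa)$ for some cardinal $\kappa$, inside which $M$ is isomorphic to a structure $M^{*}$ that is countable. All of the construction will be carried out inside $V^\B/U$, and the extending model $N$ we ultimately obtain will live there as well, which will automatically make it pseudo-countable.

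The first step is to check that the data of the problem---the sequence $s$ and its finite extension $t\fo s$ specified in $M$---transport faithfully into $V^\B/U$. Because the universal finite sequence is defined by a fixed $\Sigma_{m+1}$ formula in the language of set theory, satisfaction of that formula inside $M$ is absolute between the external universe and $V^\B/U$ via the isomorphism $M\iso M^{*}$; hence $M^{*}$ defines the same sequence $s$, and the finite extension $t$ corresponds to a finite extension in $M^{*}$ as seen by $V^\B/U$. Next, apply the universal finite sequence theorem for countable models from \cite{Hamkins:Every-countable-model-of-arithmetic-or-set-theory-has-a-pointwise-definable-end-extension} \emph{inside} $V^\B/U$ to the countable model $M^{*}$. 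This yields, inside $V^\B/U$, a $\Sigma_m$-elementary end extension $M^{*}\elesub_{\Sigma_m}N^{*}\satisfies\ZFCbar$ in which the universal sequence is $t$.

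The remaining step is to transfer this conclusion back externally. End extension and $\Sigma_m$-elementarity are relations on the concrete pair of structures $(M^{*},N^{*})$, so the verdict of $V^\B/U$ on them coincides with the external one; likewise, the evaluation of the universal sequence inside $N^{*}$ is internal to $N^{*}$ and really is $t$. Since $N^{*}$ is seen as countable in $V^\B/U$, it is pseudo-countable by definition. Identifying $M$ with $M^{*}$ via the isomorphism and setting $N:=N^{*}$ completes the argument.

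The main obstacle is exactly the one flagged in the proof of Theorem \ref{Theorem.Resurrection-uncountable}: whenever $\B$ is nontrivial, $V^\B/U$ is $\omega$-nonstandard and recognises only a nonstandard fragment of $\ZFCbar$, so strictly speaking the internal instance of the countable-model theorem being invoked is its own nonstandard version. The remedy is the \emph{suitable theory} formulation of \cite{HamkinsWilliams2021:The-universal-finite-sequence}: the universal finite sequence result applies uniformly to countable models of sufficiently large fragments of $\ZFCbar$, and this is all that $V^\B/U$ needs to see in $M^{*}$. Every standard axiom of $\ZFCbar$ holding in $M$ holds in $M^{*}$ from the point of view of $V^\B/U$, which is enough to drive the internal application, and with that adjustment the argument goes through cleanly.
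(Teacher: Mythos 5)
Your proposal is correct and takes essentially the same route as the paper, whose proof simply applies the countable-model universal finite sequence theorems inside the Boolean quotient world $V^\B/U$ where $M$ is seen as countable and extracts the end extension $N$ from there. Your additional care about $\omega$-nonstandardness and the \emph{suitable theory} formulation of \cite{HamkinsWilliams2021:The-universal-finite-sequence} is exactly the caveat the paper itself raises in its proof of theorem \ref{Theorem.Resurrection-uncountable}, so this is a faithful (and somewhat more explicit) rendering of the intended argument.
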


\begin{proof}
We simply apply the theorems mentioned earlier inside the set-theoretic world $V^\B/U$, where the pseudo-countable model $M$ is seen as actually countable. The desired end-extension $N$ can therefore also be found in $V^\B/U$.
\end{proof}

Let us draw out the consequence for individual models. 

\begin{corollary}
 For any c.e. theory $\ZFCbar$ extending \ZFC, every model $M\satisfies\ZFCbar$ has an elementary extension $M\elesub\Mbar$ such that for any particular natural number $m$, if the $\Sigma_{m+1}$-definable universal finite sequence in $M$ and $\Mbar$ is $s$, then for any finite extension $t\fo s$ in $\Mbar$, there is an end extension $\Mbar\of_e N\satisfies\ZFCbar$ such that the universal finite sequence defined in $N$ is $t$.
\end{corollary}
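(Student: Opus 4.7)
The plan is to combine Theorem \ref{Theorem.Every-model-extends-to-psuedo-countable-model} with Theorem \ref{Theorem.Sigma_m-universal-class-of-models}, using the first to lift $M$ into the pseudo-countable multiverse and the second to realize the universal extension property there, with a single pseudo-countable $\Mbar$ serving all values of $m$ at once.

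First, given any $M\satisfies\ZFCbar$, I would invoke Theorem \ref{Theorem.Every-model-extends-to-psuedo-countable-model} to obtain an elementary extension $M\elesub\Mbar$ with $\Mbar$ pseudo-countable. Elementarity immediately yields $\Mbar\satisfies\ZFCbar$, and for each natural number $m$ it also ensures that the $\Sigma_{m+1}$-definable, \ZF-provably finite universal sequence computed in $M$ is carried by the embedding to the sequence computed in $\Mbar$. Indeed, the defining formula picks out a unique finite object in each model, and elementarity transports the unique witness in $M$ to the unique witness in $\Mbar$; identifying $M$ with its image inside $\Mbar$, the sequence is literally the same object $s$ in both. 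The crucial point is that this single $\Mbar$ works uniformly for every $m$, since pseudo-countability is a property of $\Mbar$ alone and does not depend on $m$.

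Second, fixing any particular natural number $m$ and any finite extension $t\fo s$ in $\Mbar$, I would apply Theorem \ref{Theorem.Sigma_m-universal-class-of-models} to the pseudo-countable model $\Mbar\satisfies\ZFCbar$. This produces a pseudo-countable $\Sigma_m$-elementary end extension $\Mbar\elesub_{\Sigma_m}N\satisfies\ZFCbar$ in which the universal finite sequence defined in $N$ is precisely $t$. Since $N$ is in particular an end extension of $\Mbar$ satisfying $\ZFCbar$, this is the extension witnessing the corollary. Both $m$ and the finite extension $t$ may vary independently afterward; one simply reapplies Theorem \ref{Theorem.Sigma_m-universal-class-of-models} afresh each time, always against the same fixed $\Mbar$.

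I do not foresee a serious obstacle beyond this two-step assembly. The one point deserving care is the identification of the sequence $s$ across $M$ and $\Mbar$, since nothing in the corollary statement permits $\Mbar$ to compute a different sequence than $M$; but this is a clean consequence of elementarity together with the uniqueness built into \ZF-provable finiteness and $\Sigma_{m+1}$-definability. Everything else is a direct pull-back of the countable case: once $\Mbar$ lies in the pseudo-countable multiverse, Theorem \ref{Theorem.Sigma_m-universal-class-of-models} does all the substantive work, and the corollary follows by bookkeeping.
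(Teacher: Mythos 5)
Your proof is correct and takes essentially the same route as the paper: the paper's own argument applies the Boolean ultrapower $j:V\to\Vbar\of\Vbar[G]$ to get $\Mbar=j(M)$ pseudo-countable (which is precisely the content of theorem \ref{Theorem.Every-model-extends-to-psuedo-countable-model}) and then finds all the extensions $N$ inside $\Vbar[G]$ via the universal finite sequence theorem, exactly as you do by citing theorem \ref{Theorem.Sigma_m-universal-class-of-models} afresh for each $m$ and $t$. Your added care in checking that elementarity transports the sequence $s$ from $M$ to $\Mbar$, and that one fixed $\Mbar$ serves all $m$ simultaneously, is sound and matches the paper's intent.
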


\begin{proof}
 The point is that if $j:V\to\Vbar\of\Vbar[G]$ is the Boolean ultrapower arising from an ultrafilter $U$ on the forcing $\B$ to collapse $M$ to become countable, then $\Mbar=j(M)$ will be a pseudo-countable model with all the desired properties. It is an elementary extension of $M$ via $j$, and it is countable in $\Vbar[G]$, where it will therefore fall under the universal finite sequence extension theorem. So all the desired extension models $N$ of $\Mbar$ can also be found in $\Vbar[G]$ for any desired $t$.
\end{proof}

Using the model-theoretic compactness arguments we had earlier, one would be able to achieve an extension $N$ realizing any fixed $t$, but this corollary produces a single elementary extension $\Mbar$ such that all the various sequences $t$ can be realized in end-extensions of $\Mbar=j(M)$. In other words, you only need to make one elementary extension $M\elesub\Mbar$, after which all the further extensions will be end-extensions. So the corollary seems a bit stronger than what is possible to achieve easily with the compactness-style argument. In any case, theorem \ref{Theorem.Sigma_m-universal-class-of-models} establishes the universal finite sequence extension property throughout the multiverse of pseudo-countable models of set theory, a robust realm including many uncountable models.

\section{Pseudo-pointwise-definability}

Results in \cite{Hamkins:Every-countable-model-of-arithmetic-or-set-theory-has-a-pointwise-definable-end-extension} show that every countable model of arithmetic or set theory has a pointwise definable end extension, one in which every individual is definable without parameters. Applying these theorems in the $\omega$-nonstandard realm $V^\B/U$, we see that every pseudo-countable model has an end extension to a pseudo-countable model that is thought in $V^\B/U$ to be pointwise definable. In other words, every model $M$ of \PA\ or \ZF\ admits an elementary extension $M\elesub \Mbar$ to a pseudo-countable model, which has an end extension $M\of_e N$ to a model $N$ that is \emph{pseudo-pointwise-definable}, a model viewed as pointwise definable inside $V^\B/U$ for some collapse forcing $\B$ and ultrafilter $U\of\B$. 

The model $N$, of course, may have vast uncountable size in $V$, and it is therefore not actually pointwise definable. Rather, what is going on is that the $\omega$-nonstandard universe $V^\B/U$ has what it thinks is the satisfaction relation for $N$, and it seems in that nonstandard universe that every individual in $N$ satisfies a defining property $\varphi$. Many of those formulas, however, will be nonstandard. Indeed, no pseudo-pointwise-definable infinite model can be actually pointwise definable, for then in $V^\B/U$ we would be able to define the standard cut by looking at the smallest formulas necessary to define the elements.

What information about $N$ can we extract in $V$ from the fact that $N$ is seen as pointwise definable inside $V^\B/U$? What is the nature of these pseudo-pointwise-definable models? I am unsure.

\printbibliography

\end{document}